\title[CSMZV spans whole spaces CMZV]{Cyclotomic symmetric multiple zeta values span the space of cyclotomic multiple zeta values}
\author{Takumi Anzawa}
\address{graduate school of mathematics, Nagoya University, furo-cho, chikusa-ku, Nagoya, 464-8602, japan}
\email{m20001s@math.nagoya-u.ac.jp}
\date{\today} 
\theoremstyle{theorem}{
\newtheorem{df}{Definition}[section]}
\theoremstyle{theorem}{
\newtheorem{thm}[df]{Theorem}}
\newtheorem{lem}[df]{Lemma}
\newtheorem{cor}[df]{Corollary}
{\theoremstyle{remark}
\newtheorem{rmk}[df]{Remark}}
\DeclareMathOperator{\dep}{dp}
\DeclareMathOperator{\wt}{wt}
\DeclareMathOperator{\Span}{span}
\newcommand{\ide}[1]{\mathfrak{#1}}
\newcommand{\A}{\mathscr{A}}
\newcommand{\engMonth}{
	\ifcase\month
		\or January 
		\or February 
		\or March 
		\or April
		\or May 
		\or June
		\or July 
		\or August 
		\or September
		\or October 
		\or November 
		\or December\fi
} 
\newcommand{\fullday}{
	\engMonth \, \the\day, \the\year	
} 
\let\S\relax
\newcommand{\S}{\mathcal{S}}
\newcommand{\RS}{\mathcal{RS}}
\newcommand{\cmzv}[4]{\zeta\left(\begin{matrix} {#1} \hspace{-0.3cm}&,\ldots,&\hspace{-0.3cm} {#2} \\ {#3} \hspace{-0.3cm}&,\ldots,&\hspace{-0.3cm}{#4}\end{matrix} \right)}
\newcommand{\csmzv}[4]{\zeta^{\shuffle}\left(\begin{matrix} {#1} \hspace{-0.3cm}&,\ldots,&\hspace{-0.3cm} {#2} \\ {#3} \hspace{-0.3cm}&,\ldots,&\hspace{-0.3cm}{#4}\end{matrix} \right)}
\newcommand{\chmzv}[4]{\zeta^{*}\left(\begin{matrix} {#1} \hspace{-0.3cm}&,\ldots,&\hspace{-0.3cm} {#2} \\ {#3} \hspace{-0.3cm}&,\ldots,&\hspace{-0.3cm}{#4}\end{matrix} \right)}
\newcommand{\cssmzv}[4]{\zeta^{\shuffle}_{\S(N,\alpha)}\left(\begin{matrix} {#1} \hspace{-0.3cm}&,\ldots,&\hspace{-0.3cm} {#2} \\ {#3} \hspace{-0.3cm}&,\ldots,&\hspace{-0.3cm}{#4}\end{matrix} \right)}
\newcommand{\chsmzv}[4]{\zeta^{*}_{\S(N,\alpha)} \left(\begin{matrix} {#1} \hspace{-0.3cm}&,\ldots,&\hspace{-0.3cm} {#2} \\ {#3} \hspace{-0.3cm}&,\ldots,&\hspace{-0.3cm}{#4}\end{matrix} \right)}
\newcommand{\cSmzv}[4]{\zeta_{\S(N,\alpha)}\left(\begin{matrix} {#1} \hspace{-0.3cm}&,\ldots,&\hspace{-0.3cm} {#2} \\ {#3} \hspace{-0.3cm}&,\ldots,&\hspace{-0.3cm}{#4}\end{matrix} \right)}
\newcommand{\crsmzv}[4]{\zeta_{\RS(N,\alpha)} \left(\begin{matrix} {#1} \hspace{-0.3cm}&,\ldots,&\hspace{-0.3cm} {#2} \\ {#3} \hspace{-0.3cm}&,\ldots,&\hspace{-0.3cm}{#4}\end{matrix} \right)}
\numberwithin{equation}{section}
\begin{document}
\setstretch{0.9}
\maketitle
\begin{abstract}
In this paper, we show that the cyclotomic symmetric multiple zeta values, independently proposed by Jarossay, Singar and Zhao, and Tasaka, span the space of the cyclotomic multiple zeta values modulo $\pi i$.
\end{abstract}
\tableofcontents

\section{Introduction}

Throughout this paper,
let $N$ be a positive integer, $\alpha \in \mathbb{Z}/N\mathbb{Z}$, $\Gamma_N\subset \mathbb{C}$ be the set of $N$-th roots of unity, and $K_N$ be the $N$-th cyclotomic field over $\mathbb{Q}$.

\begin{comment}
Kaneko and Zagier introduce two types of the analogue of multiple zeta values: for $(k_1,\ldots,k_r)\in\mathbb{Z}_{>0}$
the finite multiple zeta values (FMZVs)
\[
\zeta_{\A}(k_1,\ldots,k_r) := \left(\sum_{0<m_1<\cdots <m_r} \frac{1}{m_1^{k_1}\cdots m_r^{k_r}}\right)_p \in \left(\prod_{p\text{ : prime}} \mathbb{Z}/p\mathbb{Z}\right) \Bigm/ \left(\bigoplus_{p\text{ : prime}} \mathbb{Z}/p\mathbb{Z}\right)
\]
and the symmetric multiple zeta values
\[
\zeta_{\S}(k_1,\ldots,k_r) := \sum_{i = 0}^r \zeta^{\shuffle}(k_1,\ldots,k_i)\zeta^{\shuffle}(k_r,\ldots,k_{i+1}) \in \widetilde{\mathcal{Z}}/\pi i\widetilde{\mathcal{Z}}.
\]
Here, $\zeta^{\shuffle}$ is the $\shuffle$-regularized multiple zeta values and $\widetilde{\mathcal{Z}}$ is the $\mathbb{Q}$-algebra generated by $\pi i$ and all multiple zeta values (in the special cases of $N = 1$, defined in section \ref{sec:CMZV}). 
\end{comment}

For  $\mathbf{k}:= (k_1,\ldots,k_r) \in \mathbb{Z}_{>0}^r$ and $\boldsymbol{\xi} := (\xi_1,\ldots,\xi_r) \in \Gamma_N^r$ with $(k_r,\xi_r)\neq (1,1)$, 
we define the \textbf{cyclotomic multiple zeta value} (CMZV) by
\[
\cmzv{\xi_1}{\xi_r}{k_1}{k_r} := \sum_{0<m_1<\cdots <m_r}\frac{\xi_1^{m_1}\cdots \xi_r^{m_r}}{m_1^{k_1}\cdots m_r^{k_r}}.
\] 
The condition $(k_r,\xi_r) \neq (1,1)$ ensures the convergence of this series. We call such a pair $(\mathbf{k}, \boldsymbol{\xi}) \in \mathbb{Z}_{>0}^r\times \Gamma_N^r$ with $(k_r,\xi_r)\neq (1,1)$ an admissible index and we set $\zeta\left( \begin{matrix} \emptyset \\ \emptyset \end{matrix}\right) = 1$.
We define the weight of $\mathbf{k}$ by $\wt \mathbf{k}:=k_1+\cdots + k_r$
and we define the depth of $\mathbf{k} \in \mathbb{Z}_{>0}^r$ and $\boldsymbol{\xi}\in \Gamma_N^r$ by $\dep \mathbf{k}:= \dep \boldsymbol{\xi} := r$.
By $\mathcal{Z}^N$, we denote the $K_N$-algebra generated by all CMZVs. Put $\widetilde{\mathcal{Z}}^N := \mathcal{Z}^N [\pi i] \subset \mathbb{C}$. 

For $ (k_1,\ldots,k_r) \in \mathbb{Z}_{>0}^r$ and $(\xi_1,\ldots,\xi_r) \in \Gamma_N^r$,
we define \textbf{cyclotomic symmetric multiple zeta values} (CSMZVs), which were introduced independently by Jarossay \cite{Jarossay}, Singer and Zhao \cite{Singer-Zhao}, and Tasaka \cite{Tasaka}\footnote{Tasaka calls a lift of this symmetric multiple zeta value a cyclotomic symmetric multiple zeta value.} as an element of $\widetilde{\mathcal{Z}}^N /(\pi i\widetilde{\mathcal{Z}}^N )$, by
\begin{align*}
\begin{split}
\cSmzv{\xi_1}{\xi_r}{k_1}{k_r} :=&  \sum_{j = 0}^r  (\xi_{i+1}\cdots \xi_r)^{\alpha} (-1)^{k_{j+1}+\cdots + k_r}\\
&\times\left(\csmzv{\xi_1}{\xi_j}{k_1}{k_j}\csmzv{\overline{\xi_r}}{\overline{\xi_{j+1}}}{k_r}{k_{j+1}} \mod \pi i \right).
\end{split}
\end{align*} 
Here, $\zeta^{\shuffle}$ is the $\shuffle$-regularized CMZV (see Section \ref{sec:CMZV} for the definition).
Let us define $\mathcal{Z}^{\S(N,\alpha)}$ as the $K_N$-linear subspace of $\widetilde{\mathcal{Z}}^N/(\pi i \widetilde{\mathcal{Z}}^N)$ spanned by all CSMZVs.

The study of CSMZVs is motivated by the cyclotomic analogue of Kaneko-Zagier conjecture ($N = 1$ in \cite[Primitive Conjecture]{Kaneko}, general $N$ in \cite[Conjecture 6.1]{Tasaka}), which predicts, 
for $N\in\mathbb{Z}_{>0}$ and $\alpha \in (\mathbb{Z}/N\mathbb{Z})^{\times}$, the existence of a well-defined $K_N$-algebraic isomorphism
\begin{align}
\begin{array}{ccc}
\mathcal{Z}^{\mathscr{A}(N,\alpha)} &\longrightarrow &  \widetilde{\mathcal{Z}}^N/(\pi i \widetilde{\mathcal{Z}}^N)  \\
\rotatebox{90}{$\in$}&& \rotatebox{90}{$\in$}\\
\zeta_{\mathscr{A}(N,\alpha)}(\mathbf{k})  & \longmapsto & \zeta_{\S(N,\alpha)}(\mathbf{k}).
\end{array}
\label{eq:KZ-conj}
\end{align}
Here, $\zeta_{\mathscr{A}(N,\alpha)}$ is a cyclotomic finite multiple zeta values (CFMZVs) and $\mathcal{Z}^{\mathscr{A}(N,\alpha)}$ is their $K_N$-linear span.
In particular, CSMZVs and CFMZVs in the case of $N=1$ have been studied by many researchers  (\cite{Hirose}, \cite{Rosen}, \cite{BTT},etc...).  Indeed, Yasuda proved that CSMZVs span the entire space of CMZVs for $N = 1$.
\begin{thm}[\cite{Yasuda}]\label{thm:Yasuda-1}
We have
\begin{align*}
\widetilde{\mathcal{Z}}^{1}/(\pi i \widetilde{\mathcal{Z}}^1)=  \mathcal{Z}^{\S(1,1)}. 
\end{align*}
\end{thm}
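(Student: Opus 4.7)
The plan is to prove the non-trivial inclusion $\widetilde{\mathcal{Z}}^1/(\pi i \widetilde{\mathcal{Z}}^1) \subseteq \mathcal{Z}^{\S(1,1)}$; the reverse inclusion is immediate from the definition. I would first verify that $\mathcal{Z}^{\S(1,1)}$ is a $\Q$-subalgebra of the quotient, which follows from a stuffle-type product formula for symmetric multiple zeta values (inherited from the corresponding formula for $\zeta^{\shuffle}$). Given this, it suffices to realize each class $[\zeta^{\shuffle}(\mathbf{k})]$ inside $\mathcal{Z}^{\S(1,1)}$, and I would proceed by induction on $r = \dep \mathbf{k}$ at fixed weight $w = \wt \mathbf{k}$.

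The inductive step comes from isolating the two boundary terms $j = r$ and $j = 0$ in the defining sum for $\zeta_{\S(1,1)}(\mathbf{k})$, which gives
\begin{align*}
\zeta_{\S(1,1)}(k_1,\ldots,k_r) \equiv \zeta^{\shuffle}(k_1,\ldots,k_r) + (-1)^{w}\zeta^{\shuffle}(k_r,\ldots,k_1) + R(\mathbf{k}) \pmod{\pi i},
\end{align*}
where $R(\mathbf{k})$ is a sum of products $\zeta^{\shuffle}(\mathbf{k}_1)\,\zeta^{\shuffle}(\mathbf{k}_2)$ with $\dep \mathbf{k}_i < r$. Expanding each product via the $\shuffle$ formula rewrites $R(\mathbf{k})$ as a $\Q$-linear combination of classes $[\zeta^{\shuffle}(\mathbf{k}')]$ with $\dep \mathbf{k}' < r$, which lie in $\mathcal{Z}^{\S(1,1)}$ by the inductive hypothesis. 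Thus
\[
[\zeta^{\shuffle}(k_1,\ldots,k_r)] + (-1)^{w}[\zeta^{\shuffle}(k_r,\ldots,k_1)] \in \mathcal{Z}^{\S(1,1)}.
\]
To separate the two summands, I would rerun the same argument for the stuffle-regularized variant $\zeta^{*}_{\S(1,1)}$ (which coincides with $\zeta_{\S(1,1)}$ in the quotient), then pass between $\zeta^{\shuffle}$ and $\zeta^{*}$ via the regularization comparison, which contributes only strictly lower-depth terms. Combining the two relations isolates $[\zeta^{\shuffle}(\mathbf{k})]$ modulo $\mathcal{Z}^{\S(1,1)}$.

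The main obstacle is the base case $r = 1$: at depth one the key identity collapses to $\zeta_{\S(1,1)}(k) \equiv (1+(-1)^k)\zeta(k) \equiv 0 \pmod{\pi i}$ (identically zero for odd $k$, and $\zeta(2m) \in \pi i \widetilde{\mathcal{Z}}^1$ for even $k = 2m$), so depth-one symmetric MZVs carry no information about single zeta values, and the induction cannot start from $r=1$. Each odd $\zeta(2k+1)$ must instead be extracted from a carefully chosen depth-two $\zeta_{\S(1,1)}(a,b)$ with $a + b = 2k+1$: expanding the definition and using Euler's decomposition of $\zeta(a)\zeta(b)$ (for odd $a+b$) as a $\Q$-linear combination of single odd zeta values together with $\pi^{2}\cdot(\text{lower weight})$ terms, one reads off $\zeta(2k+1)$ with a computable nonzero coefficient modulo $\pi i \widetilde{\mathcal{Z}}^1$. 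This explicit depth-one recovery, together with the $\shuffle$/$*$ symmetrization step above, is the technical heart of the proof.
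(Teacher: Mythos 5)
This theorem is not proved in the paper at all: it is imported from \cite{Yasuda} and used as a black box, so the only question is whether your argument would actually establish it, and it would not. The decisive step --- ``combining the two relations isolates $[\zeta^{\shuffle}(\mathbf{k})]$'' --- fails for a structural reason. The $*$-regularized symmetric sum is congruent to the $\shuffle$-regularized one modulo $\pi i$ (Remark \ref{rmk:CSMZV-S-H} of the paper; Kaneko--Zagier/Hoffman for $N=1$), so rerunning your argument with $\zeta^{*}_{\S(1,1)}$ reproduces the \emph{same} linear relation
\[
[\zeta^{\shuffle}(\mathbf{k})]+(-1)^{w}[\zeta^{\shuffle}(\overset{\leftarrow}{\mathbf{k}})]\in\mathcal{Z}^{\S(1,1)}+(\text{products and lower depth}),
\]
not an independent one; two copies of one relation cannot separate two unknowns. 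The only additional relation available in this circle of ideas is the antipode relation (Lemma \ref{lem:antipode}), $[\zeta^{\shuffle}(\mathbf{k})]+(-1)^{r}[\zeta^{\shuffle}(\overset{\leftarrow}{\mathbf{k}})]\in\mathcal{PD}^{1}_{w,r}$, and combining the two gives $\bigl(1-(-1)^{w+r}\bigr)[\zeta^{\shuffle}(\mathbf{k})]\in\mathcal{Z}^{\S(1,1)}+\mathcal{PD}^{1}_{w,r}$. The coefficient vanishes exactly when $w+r$ is even, which is precisely the case that the parity theorem (Lemma \ref{lem:parity}) does not already reduce to lower depth: for instance for $\mathbf{k}=(3,5)$ your method outputs $0\in\mathcal{Z}^{\S(1,1)}$ and says nothing about $\zeta(3,5)$, which is believed not to be a polynomial in lower-depth values. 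This degeneration is exactly why the present paper can only run the parity-plus-antipode computation when $\xi_1\cdots\xi_r\neq 1$ and must invoke Yasuda for the remaining case; Yasuda's proof necessarily uses input beyond parity, antipode and the stuffle algebra structure.

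Two secondary problems. First, your claim that expanding $\zeta^{\shuffle}(\mathbf{k}_1)\zeta^{\shuffle}(\mathbf{k}_2)$ by the shuffle product rewrites $R(\mathbf{k})$ in depth $<r$ is false: depth is additive under $\shuffle$, so these products expand into depth-$r$ words of weight $w$. The correct way to absorb $R(\mathbf{k})$ is to note that each \emph{factor} has weight $<w$, add an outer induction on weight, and use that $\mathcal{Z}^{\S(1,1)}$ is closed under multiplication --- this is what Lemma \ref{lem:cYasuda-pd} does via $\mathcal{PD}^{N}_{w,r}$. Second, your base-case analysis (depth one is vacuous; odd single zetas must be extracted from a depth-two symmetric value via Euler's decomposition) is correct and identifies a genuine feature of the problem, but it only handles depth $\le 2$ in odd weight and does not repair the broken inductive step above.
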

For $N = 1$, if \eqref{eq:KZ-conj} is the well-defined $K_N$-algebra map, then Theorem \ref{thm:Yasuda-1} implies the surjectivity of \eqref{eq:KZ-conj}.

In this paper, we consider a generalization of Theorem \ref{thm:Yasuda-1} for general $N\in\mathbb{Z}_{>0}$.
Our main theorem in this paper, stated as follows, is proved by a reduction to Theorem \ref{thm:Yasuda-1}.

\begin{thm}\label{thm:cYasuda}
For all $N\in \mathbb{Z}_{> 0}$ and $\alpha \in (\mathbb{Z}/N\mathbb{Z})^{\times}$, the equality
\begin{align}
 \widetilde{\mathcal{Z}}^N/ (\pi i\widetilde{\mathcal{Z}}^N) = \mathcal{Z}^{\mathcal{S}(N,\alpha)}\label{eq:Yasu}
\end{align}
holds.
\end{thm}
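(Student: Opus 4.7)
The plan is to prove the non-trivial inclusion $\widetilde{\mathcal{Z}}^N/(\pi i\widetilde{\mathcal{Z}}^N)\subseteq\mathcal{Z}^{\mathcal{S}(N,\alpha)}$ by an inductive reduction to Yasuda's level-one theorem (Theorem \ref{thm:Yasuda-1}). My first observation is that specializing $\boldsymbol{\xi}=(1,\ldots,1)$ in the defining sum kills every twist factor $(\xi_{j+1}\cdots\xi_r)^{\alpha}$ and recovers exactly the level-one CSMZV, so Theorem \ref{thm:Yasuda-1} at once yields $\widetilde{\mathcal{Z}}^1/(\pi i\widetilde{\mathcal{Z}}^1)\subseteq\mathcal{Z}^{\mathcal{S}(N,\alpha)}$. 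It therefore suffices to show, for each admissible $(\mathbf{k},\boldsymbol{\xi})$ with $\boldsymbol{\xi}\in\Gamma_N^r$, that the class of $\zeta^{\shuffle}(\mathbf{k};\boldsymbol{\xi})$ lies in $\mathcal{Z}^{\mathcal{S}(N,\alpha)}$; this I would prove by strong induction on the weight $\wt\mathbf{k}$.

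The inductive step isolates the two extreme summands of $\zeta_{\mathcal{S}(N,\alpha)}(\mathbf{k};\boldsymbol{\xi})$: at $j=r$ we read off $\zeta^{\shuffle}(\mathbf{k};\boldsymbol{\xi})$ and at $j=0$ we read off $(\xi_1\cdots\xi_r)^{\alpha}(-1)^{\wt\mathbf{k}}\,\zeta^{\shuffle}(\overline{\xi_r},\ldots,\overline{\xi_1};k_r,\ldots,k_1)$, while the cross summands $1\le j\le r-1$ are products of two $\zeta^{\shuffle}$'s, each of weight strictly smaller than $\wt\mathbf{k}$ and therefore---by the induction hypothesis together with the shuffle-product expansion---representable inside $\mathcal{Z}^{\mathcal{S}(N,\alpha)}$. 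This reduces the CSMZV identity to
\[
\zeta^{\shuffle}(\mathbf{k};\boldsymbol{\xi})+(\xi_1\cdots\xi_r)^{\alpha}(-1)^{\wt\mathbf{k}}\,\zeta^{\shuffle}(\overline{\xi_r},\ldots,\overline{\xi_1};k_r,\ldots,k_1)\in\mathcal{Z}^{\mathcal{S}(N,\alpha)}\pmod{\pi i\widetilde{\mathcal{Z}}^N}.
\]

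The hard part is decoupling this symmetric pair: applying the very same identity to the reversed-conjugated index $(\mathbf{k}^{\mathrm{rev}},\overline{\boldsymbol{\xi}}^{\mathrm{rev}})$ produces a relation linearly dependent on the above, since the two scalar coefficients of $\zeta^{\shuffle}(\mathbf{k};\boldsymbol{\xi})$ are complex conjugates lying on the unit circle. To break the symmetry I would bring in an auxiliary family such as the shifted $\shuffle$- or $\ast$-regularized CSMZVs (foreshadowed by the $\zeta^{\shuffle}_{\mathrm{sft}}$ and $\zeta^{\ast}_{\mathrm{sft}}$ macros already declared in the preamble); the shift parameter alters the scalar attached to the reversed term by a genuinely new factor, yielding a second relation independent of the first, modulo $\pi i\widetilde{\mathcal{Z}}^N$ and modulo terms already in $\mathcal{Z}^{\mathcal{S}(N,\alpha)}$ by induction. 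Solving the resulting $2\times 2$ linear system isolates $\zeta^{\shuffle}(\mathbf{k};\boldsymbol{\xi})$ inside $\mathcal{Z}^{\mathcal{S}(N,\alpha)}$, and the induction closes; the main technical difficulty will be constructing these shifted variants and verifying that they preserve a well-defined class modulo $\pi i\widetilde{\mathcal{Z}}^N$.
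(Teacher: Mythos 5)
Your reduction to the two extreme summands $j=0$ and $j=r$ of the defining sum is the right first move and matches the paper, and your observation that naively applying the same identity to the reversed--conjugated index yields a degenerate $2\times 2$ system is correct. But the proof has a genuine gap exactly at the point you flag as ``the hard part'': the decoupling of $\zeta^{\shuffle}\left(\begin{smallmatrix}\boldsymbol{\xi}\\ \mathbf{k}\end{smallmatrix}\right)$ from its partner is never actually carried out. Your proposed mechanism --- an auxiliary family of ``shifted'' regularized CSMZVs producing a second, independent relation --- is not defined, not shown to exist, and not shown to descend to well-defined classes modulo $\pi i\widetilde{\mathcal{Z}}^N$; as written it is a placeholder for the missing idea. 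The paper's resolution needs no auxiliary family: by the parity theorem of Panzer (Lemma \ref{lem:parity}) combined with the antipode/reversal relation of Hoffman--Ihara (Lemma \ref{lem:antipode}), one has
\[
\zeta^{\shuffle}\left(\begin{matrix}\overline{\xi_r}&,\ldots,&\overline{\xi_1}\\ k_r&,\ldots,&k_1\end{matrix}\right)\equiv -(-1)^{\wt\mathbf{k}}\,\zeta^{\shuffle}\left(\begin{matrix}\xi_1&,\ldots,&\xi_r\\ k_1&,\ldots,&k_r\end{matrix}\right)
\]
modulo the space $\mathcal{PD}^{N}_{w,r}$ of lower-depth terms and nontrivial products, so the reversed--conjugated term is (up to that error) a scalar multiple of the original, and the two-term relation collapses to $\bigl(1-(\xi_1\cdots\xi_r)^{\alpha}\bigr)\,\zeta^{\shuffle}\left(\begin{smallmatrix}\boldsymbol{\xi}\\ \mathbf{k}\end{smallmatrix}\right)\in\mathcal{Z}^{\mathcal{S}(N,\alpha)}$ modulo $\mathcal{PD}^{N}_{w,r}$ and $\pi i$. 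Absorbing the depth-drop error terms forces a double induction on weight \emph{and} depth (Lemma \ref{lem:cYasuda-pd}), not the single induction on weight you propose.

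The second gap is the degenerate case. The coefficient $1-(\xi_1\cdots\xi_r)^{\alpha}$ is nonzero only when $\xi_1\cdots\xi_r\neq 1$ (this is precisely where $\alpha\in(\mathbb{Z}/N\mathbb{Z})^{\times}$ is used), and any scheme of the type you describe degenerates in the same way when $\xi_1\cdots\xi_r=1$. Your appeal to Theorem \ref{thm:Yasuda-1} covers only $\xi_1=\cdots=\xi_r=1$; it says nothing about indices with $\xi_1\cdots\xi_r=1$ but some $\xi_i\neq 1$. The paper treats these by writing the value as $Z_N^{\shuffle}(x_{\xi_1\cdots\xi_r}x_0^{k_1-1}\cdots x_{\xi_r}x_0^{k_r-1})$, whose leading letter is then $x_1$, and running a separate shuffle-product induction that strips the leading $x_1$'s and re-expresses everything in terms of words beginning with $x_{\xi}$, $\xi\neq 1$, i.e.\ in terms of the first case. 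Without both ingredients --- the parity/antipode collapse and the treatment of $\xi_1\cdots\xi_r=1$ --- your induction does not close.
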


The case of $N = 2$ was conjectured in \cite[Conjecture 8.6.9]{Zhao}, the case of $N = 3$, $4$ was done in \cite[Conjecture 1.2]{Singer-Zhao}.
The case of general $N$ with arbitrary $\alpha \in \mathbb{Z}/N\mathbb{Z}$ was in \cite[(6.1)]{Tasaka} as a problem.

The reason why we restrict the condition of $\alpha \in (\mathbb{Z}/N\mathbb{Z})^{\times}$ is related to the cyclotomic analogue of the Kaneko-Zagier conjecture \eqref{eq:KZ-conj}, whose $\alpha$ is restricted to $\alpha \in (\mathbb{Z}/N\mathbb{Z})^{\times}$.
For general $N\in\mathbb{Z}_{>0}$ with $\alpha\in(\mathbb{Z}/N\mathbb{Z})^{\times}$, Theorem \ref{thm:cYasuda} implies the surjectivity of \eqref{eq:KZ-conj} if it is the well-defined $K_N$-algebra map.

We expect the equality (\ref{eq:Yasu}) does not hold for $\alpha \notin (\mathbb{Z}/N\mathbb{Z})^{\times}$. For more details, see Remark \ref{rmk:counter-example}.

\section{Cyclotomic multiple zeta values}\label{sec:CMZV}

Let $X$ be the free monoid generated by $\{x_0,x_{\xi}\}_{\xi \in \Gamma_N}$ with the empty word $1$.
Let $\ide{h}_N:=K_N\langle X \rangle$ be the free associative $K_N$-algebra generated by $X$. 
We define two $K_N$-subalgebras $\ide{h}^0_N$ and $\ide{h}_N^1$ of $\ide{h}_N$ by
\[
\ide{h}_N^1:= K_N + \bigoplus_{\substack{w\in x_{\xi}X \\ \xi \in \Gamma_N}} K_N w
\supset
\ide{h}_N^0 := K_N +  \bigoplus_{\substack{w\in x_{\xi}X\\ \xi \in \Gamma_N \\ w\notin Xx_1 }} K_N w.
%\ide{h}_N^0 := \ide{h}_N^1x_{\mu}\ide{h}_N^0:= \Span_{K_N}\left\{ w\in X\cap \ide{h}_N^1 \middle| w\notin X x_1 \right \}.
\]

Let us define the $K_N$-bilinear binary operation $\shuffle$, called the shuffle product, on $\ide{h}_N$ by
\begin{align*}
1\shuffle w =& w \shuffle 1 = w\\
l_1w_1\shuffle l_2w_2 =& l_1(w_1\shuffle l_2w_2) + l_2(l_1w_1 \shuffle w_2 )
\end{align*}
for all $l_1$, $l_2\in \{x_0,x_{\xi}\}_{\xi \in \mu_N}$ and $w$, $w_1$, $w_2\in X$.
The pair $(\ide{h}_N, \shuffle)$ forms a $K_N$-algebra and we denote it by $\ide{h}_{N}^{\shuffle}$.
Then, $\ide{h}_N^1$ and $\ide{h}_N^0$ are closed under $\shuffle$ and become $K_N$-subalgebras of $\ide{h}_N^{\shuffle}$.
We respectively denote them by $\ide{h}_N^{1,\shuffle}$ and $\ide{h}_N^{0,\shuffle}$.

We define a $K_N$-linear map $Z_N : \ide{h}_N^{0,\shuffle} \rightarrow \mathbb{C}$ by $Z_N(1) = 1$ and 
\[
Z_N(x_{\xi_1}x_0^{k_1-1}\cdots x_{\xi_r}x_0^{k_r-1}) = \cmzv{\xi_1\overline{\xi_2}}{\xi_r\overline{\xi_{r+1}}}{k_1}{k_r},
\]
where $(k_1,\ldots,k_r)\in\mathbb{Z}_{>0}^r$ and $(\xi_1,\ldots,\xi_r)\in\Gamma_N^r$ with $(k_r,\xi_r)\neq (1,1)$ and $\xi_{r+1}=1$.
Since the CMZVs have an iterated integral expressions, $Z_N$ is a $K_N$-algebra homomorphism, i.e., for $w_1$ and $w_2\in \ide{h}_N^0$, we have
\begin{equation}
\begin{split}
Z_N(w_1 \shuffle w_2) &= Z_N(w_1)Z_N(w_2).
\end{split}
\label{eq:fds}
\end{equation}
In \cite{Arakawa-Kaneko}, it is shown that
the $K_N$-algebra homomorphim $Z_N : \ide{h}_N^{0,\shuffle}\rightarrow \mathbb{C}$ can be extended uniquely to $Z^{\shuffle}_N : \ide{h}_N^{1,\shuffle}\rightarrow \mathbb{C}$ with $Z^{\shuffle}_N (x_1)= 0$ and $Z_N^{\shuffle} \mid_{\ide{h}_N^0} = Z_N$.
For $\mathbf{k} := (k_1,\ldots,k_r) \in \mathbb{Z}_{>0}^r$ and $\boldsymbol{\xi} := (\xi_1,\ldots,\xi_r)\in\Gamma_N^r$,
we define the \textbf{$\shuffle$-regularized cyclotomic multiple zeta values} by
\[
\zeta^{\shuffle}\left(\begin{matrix}\boldsymbol{\xi} \\ \mathbf{k} \end{matrix}\right) := Z^{\shuffle}_N (x_{\xi_1\cdots \xi_r}x_0^{k_1-1}\cdots x_{\xi_r}x_0^{k_r-1}).
\]
Note that if $(k_r,\xi_r)\neq (1,1)$,  then $\shuffle$-regularized cyclotomic multiple zeta values coincide with
\[
\csmzv{\xi_1}{\xi_r}{k_1}{k_r} = \cmzv{\xi_1}{\xi_r}{k_1}{k_r}.
\]

At the end of this section, we define the $K_N$-linear subspaces $\widetilde{\mathcal{Z}}_w^N$ and $\widetilde{\mathcal{Z}}_{w,d}^N$ of $\widetilde{\mathcal{Z}}^N$ by
\begin{align*}
\widetilde{\mathcal{Z}}_{w}^N =& \Span_{K_N} \left\{ (\pi i)^{s} \zeta\left(\begin{matrix} \boldsymbol{\xi} \\ \mathbf{k} \end{matrix}\right)  \in \widetilde{\mathcal{Z}}_N
\middle| \dep \mathbf{k}, \dep \boldsymbol{\xi} \ge 0, s\ge 0, \text{ and }   \wt \mathbf{k} + s  = w\right\},\\
\widetilde{\mathcal{Z}}_{w,r}^N =& \Span_{K_N} \left\{ ( \pi i)^s \zeta\left(\begin{matrix}\boldsymbol{\xi} \\ \mathbf{k}  \end{matrix}\right)  \in \widetilde{\mathcal{Z}}^N
\middle| 0\le \dep \mathbf{k}, \dep \boldsymbol{\xi} \le r \text{ and } \wt \mathbf{k} + s = w\right\}.
\end{align*}

\section{Cyclotomic symmetric multiple zeta values}
In this section, we define the cyclotomic symmetric multiple zeta values.

\begin{df}
\begin{enumerate}
\item
For $(k_1,\ldots,k_r)\in\mathbb{Z}_{>0}^r$ and $(\xi_1,\ldots,\xi_r)\in \Gamma_N^r$, a \textbf{$\shuffle$-regularized cyclotomic symmetric multiple zeta value} ($\shuffle$-CSMZV) is an element of $\widetilde{\mathcal{Z}}^N$ defined by
\begin{align*}
\begin{split}
\cssmzv{\xi_1}{\xi_r}{k_1}{k_r} :=&  \sum_{j = 0}^r  (\xi_{j+1}\cdots \xi_r)^{\alpha} (-1)^{k_{j+1}+\cdots + k_r} \csmzv{\xi_1}{\xi_j}{k_1}{k_j}\csmzv{\overline{\xi_r}}{\overline{\xi_{j+1}}}{k_r}{k_{j+1}}.
\end{split}
\end{align*}
Here, set $\zeta_{\S(N,\alpha)}^{\shuffle} \left( \begin{matrix} \emptyset \\ \emptyset \end{matrix}\right) = 1$.
\item Under the same settings, we define a \textbf{cyclotomic symmetric multiple zeta value} (CSMZV) by
\[
\cSmzv{\xi_1}{\xi_r}{k_1}{k_r} := \cssmzv{\xi_1}{\xi_r}{k_1}{k_r}\mod   \pi i\widetilde{\mathcal{Z}}^N.
\]
%Here, set $\zeta_{\S(N,\alpha)} \left( \begin{matrix} \emptyset \\ \emptyset \end{matrix}\right) = 1$.
\end{enumerate}
\end{df}

\begin{rmk}\label{rmk:CSMZV-S-H}
We can also define \textbf{$*$-regularized cyclotomic symmetric multiple zeta value} ($*$-CSMZV) $\displaystyle \chsmzv{\xi_1}{\xi_r}{k_1}{k_r}$ as an element of $\widetilde{\mathcal{Z}}^N$. In the same way as in \cite[Theorem 4.6]{Singer-Zhao} %\footnote{There is a difference in the defining space of CSMZV. However, we can ignore that difference.} 
one can show
\[
\chsmzv{\xi_1}{\xi_r}{k_1}{k_r} \equiv \cssmzv{\xi_1}{\xi_r}{k_1}{k_r} \mod  \pi i \widetilde{\mathcal{Z}}_N.
\]
\end{rmk}

Let $\mathcal{Z}^{\S(N,\alpha),\shuffle}$ the $K_N$-linear subspace of $ \widetilde{\mathcal{Z}}^N$ spanned by $\shuffle$-CSMZVs.
We define the $K_N$-linear subspace $\mathcal{Z}_{w}^{\S(N,\alpha),\shuffle}$ of $\mathcal{Z}_{w}^{\S(N,\alpha)}$ by
\begin{align*}
\mathcal{Z}_{w}^{\S(N,\alpha),\shuffle} =& \Span_{K_N} \left\{  \zeta^{\S(N,\alpha),\shuffle}\left(\begin{matrix} \boldsymbol{\xi} \\ \mathbf{k} \end{matrix}\right)  
\middle|  \wt \mathbf{k}  = w\right\}.
\end{align*}
In the same way, we respectively define $K_N$-linear subspace  $\mathcal{Z}^{\S(N,\alpha)}$ and $\mathcal{Z}^{\S(N,\alpha)}_w$ of $ \widetilde{\mathcal{Z}}^N / (\pi i)$ generated by CSMZVs and those with weight $w$.

The ``harmonic product relations" of the $*$-CSMZV show that $\mathcal{Z}^{\S(N,\alpha)}$ is a $K_N$-algebra (\cite[Proposition 5.2]{Tasaka}), i.e.
\begin{align}
\zeta_{\S(N,\alpha)}\left(\begin{matrix} \boldsymbol{\xi}_1 \\ \mathbf{k}_1 \end{matrix} \right)  \zeta_{\S(N,\alpha)}\left(\begin{matrix} \boldsymbol{\xi}_1 \\ \mathbf{k}_1 \end{matrix} \right) \in \mathcal{Z}^{\S(N,\alpha)} \label{eq:harCSMZV}
\end{align}
for $\mathbf{k}_1\in \mathbb{Z}_{>0}^{d_1}$, $ \boldsymbol{\xi}_1 \in \Gamma_N^{d_1}$, $\mathbf{k}_2\in \mathbb{Z}_{>0}^{d_2}$ and $ \boldsymbol{\xi}_2 \in \Gamma_N^{d_2}$ ($d_1$, $d_2\in\mathbb{Z}_{>0}$).

\section{A proof of Theorem \ref{thm:cYasuda}} 

In this section, we prove Theorem \ref{thm:cYasuda} by reducing to the case of $N = 1$ (Theorem \ref{thm:Yasuda-1}).
The following theorem is a refinement of Theorem \ref{thm:cYasuda}.
\begin{thm}\label{thm:cYasuda-w}
Let $N\in\mathbb{Z}_{>0}$ and $\alpha\in(\mathbb{Z}/N\mathbb{Z})^{\times}$.
For $w\in\mathbb{Z}_{>0}$, we have
\begin{align}
 \widetilde{\mathcal{Z}}_{w}^N / \pi i\widetilde{\mathcal{Z}}_{w-1}^N = \mathcal{Z}_w^{\S(N,\alpha)}. \label{eq:cYasu-w}
\end{align}
\end{thm}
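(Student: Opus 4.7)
My plan is to prove Theorem \ref{thm:cYasuda-w} by induction on the weight $w$, the case $w=0$ being immediate since both sides reduce to $K_N$. Assume the conclusion for all weights smaller than $w$. A first simplification is that the quotient $\widetilde{\mathcal{Z}}_w^N / \pi i \widetilde{\mathcal{Z}}_{w-1}^N$ is spanned by the classes of admissible CMZVs of weight exactly $w$, because every generator $(\pi i)^s \zeta\binom{\boldsymbol{\xi}}{\mathbf{k}}$ with $s\ge 1$ already lies in $\pi i \widetilde{\mathcal{Z}}_{w-1}^N$. Thus it is enough to place each such CMZV inside $\mathcal{Z}_w^{\S(N,\alpha)}+\pi i\widetilde{\mathcal{Z}}_{w-1}^N$.

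Next I would expand the defining sum for $\zeta^{\shuffle}_{\S(N,\alpha)}\binom{\boldsymbol{\xi}}{\mathbf{k}}$. The $j=r$ summand reproduces $\zeta^{\shuffle}\binom{\boldsymbol{\xi}}{\mathbf{k}}$ itself, the $j=0$ summand is the explicit scalar $(\xi_1\cdots\xi_r)^{\alpha}(-1)^{\wt\mathbf{k}}$ times the reverse-conjugated CMZV $\zeta^{\shuffle}\binom{\overline{\xi_r},\ldots,\overline{\xi_1}}{k_r,\ldots,k_1}$, and each cross term ($1\le j\le r-1$) is a product of two $\shuffle$-regularized CMZVs each of weight strictly less than $w$. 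Applying the weight induction hypothesis to each factor and combining with the algebra closure \eqref{eq:harCSMZV} of $\mathcal{Z}^{\S(N,\alpha)}$, every cross term already lies in $\mathcal{Z}_w^{\S(N,\alpha)}+\pi i\widetilde{\mathcal{Z}}_{w-1}^N$. This reduces the problem to controlling only the extreme pair $\bigl\{\zeta\binom{\boldsymbol{\xi}}{\mathbf{k}},\,\zeta\binom{\overline{\xi_r},\ldots,\overline{\xi_1}}{k_r,\ldots,k_1}\bigr\}$ modulo $\mathcal{Z}_w^{\S(N,\alpha)}+\pi i\widetilde{\mathcal{Z}}_{w-1}^N$.

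For the sub-case $\boldsymbol{\xi}=(1,\ldots,1)$ the CMZV is an honest MZV in $\widetilde{\mathcal{Z}}_w^1$, and the factors $(\xi_{j+1}\cdots\xi_r)^{\alpha}$ in the CSMZV definition collapse to $1$, so the symmetric MZVs delivered by Theorem \ref{thm:Yasuda-1} are exactly $\zeta_{\S(N,\alpha)}\binom{1,\ldots,1}{\mathbf{k}'}$ and Yasuda's theorem settles this case by direct invocation. For the general sub-case (some $\xi_i\ne 1$), I would couple the identity above at $(\boldsymbol{\xi},\mathbf{k})$ with its counterpart at the reverse-conjugated index, and with a level-$N$ duality coming from path-reversal / change-of-variable on $\shuffle$-regularized iterated integrals, plus (where needed) an averaging over the natural action of $\Gal(K_N/\Q)$, so as to produce enough independent linear relations to isolate $\zeta\binom{\boldsymbol{\xi}}{\mathbf{k}}$ in $\mathcal{Z}_w^{\S(N,\alpha)}+\pi i\widetilde{\mathcal{Z}}_{w-1}^N$.

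The main obstacle is precisely this last step. The naive $2\times 2$ system coming solely from swapping $(\boldsymbol{\xi},\mathbf{k})$ with its reverse-conjugate is degenerate, because the relevant coefficient matrix has determinant zero as a consequence of $\xi^{\alpha}\overline{\xi}^{\alpha}=1$ for $\xi\in\Gamma_N$. Breaking this degeneracy is the crux, and I expect it is what forces the hypothesis $\alpha\in(\Z/N\Z)^{\times}$: outside this regime the Galois action of $\Gal(K_N/\Q)$ does not act compatibly on CSMZVs, matching the expectation recorded in Remark \ref{rmk:counter-example} that \eqref{eq:Yasu} fails when $\alpha$ is not a unit.
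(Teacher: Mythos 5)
Your reduction of the cross terms ($1\le j\le r-1$) and your appeal to Theorem \ref{thm:Yasuda-1} for the all-ones indices are essentially what the paper does, but the proposal stops exactly where the real work begins: you correctly observe that the $2\times 2$ system obtained by pairing the relation at $(\boldsymbol{\xi},\mathbf{k})$ with the relation at the reverse-conjugate index is degenerate, and then you leave the resolution of that degeneracy as a hope (``duality'', ``Galois averaging'') rather than an argument. The paper's way out is concrete, and you are missing both of its ingredients. First, Panzer's parity theorem at level $N$ (Lemma \ref{lem:parity}): $\zeta^{\shuffle}\binom{\boldsymbol{\xi}}{\mathbf{k}} - (-1)^{w-r}\zeta^{\shuffle}\binom{\overline{\boldsymbol{\xi}}}{\mathbf{k}}$ lies in the space $\mathcal{PD}^{N}_{w,r}$ of lower-depth terms plus products. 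Second, the Hoffman--Ihara antipode relation (Lemma \ref{lem:antipode}): $\zeta^{\shuffle}\binom{\boldsymbol{\xi}}{\mathbf{k}} + (-1)^r\zeta^{\shuffle}\binom{\overset{\leftarrow}{\boldsymbol{\xi}}}{\overset{\leftarrow}{\mathbf{k}}}$ also lies in $\mathcal{PD}^{N}_{w,r}$. Applying these two in succession to the $j=0$ term turns $(-1)^{w}(\xi_1\cdots\xi_r)^{\alpha}\,\zeta^{\shuffle}\binom{\overline{\xi_r},\ldots,\overline{\xi_1}}{k_r,\ldots,k_1}$ into $-(\xi_1\cdots\xi_r)^{\alpha}\,\zeta^{\shuffle}\binom{\boldsymbol{\xi}}{\mathbf{k}}$ modulo $\mathcal{PD}^{N}_{w,r}$, so the single relation $\zeta^{\shuffle}_{\S(N,\alpha)}\binom{\boldsymbol{\xi}}{\mathbf{k}} \equiv \bigl(1-(\xi_1\cdots\xi_r)^{\alpha}\bigr)\zeta^{\shuffle}\binom{\boldsymbol{\xi}}{\mathbf{k}}$ already isolates the CMZV --- no $2\times2$ system is needed --- with invertible coefficient precisely when $\xi_1\cdots\xi_r\neq 1$ and $\alpha\in(\mathbb{Z}/N\mathbb{Z})^{\times}$.

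Two consequences of this that your plan does not accommodate. (i) Both lemmas hold only modulo lower \emph{depth} at the same weight, plus products, so an induction on weight alone is insufficient: the paper runs a double induction on weight and depth through the filtration $\mathcal{PD}^{N}_{w,r}$ (Lemma \ref{lem:cYasuda-pd}), and your weight-only induction cannot absorb the same-weight, lower-depth correction terms these relations generate. (ii) The correct dichotomy is $\xi_1\cdots\xi_r\neq 1$ versus $\xi_1\cdots\xi_r=1$, not all-ones versus not-all-ones: when the product of the $\xi_i$ equals $1$ but the $\xi_i$ are not all $1$, the coefficient $1-(\xi_1\cdots\xi_r)^{\alpha}$ vanishes and the argument above yields nothing. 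The paper needs a separate step here, writing $\zeta^{\shuffle}\binom{\boldsymbol{\xi}}{\mathbf{k}} = Z^{\shuffle}_N(x_{\xi_1\cdots\xi_r}x_0^{k_1-1}\cdots x_{\xi_r}x_0^{k_r-1})$ and using shuffle products with prefixes beginning in $x_1$ (together with $Z^{\shuffle}_N(x_1)=0$) to re-express such words, by an induction on the prefix length, in terms of words whose leading letter is $x_{\xi'}$ with $\xi'\neq 1$; these fall back into the first case, and only the genuinely all-ones words are sent to Yasuda's theorem. Your proposal is silent on this intermediate case, which is exactly where the interaction between the two steps lives.
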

It suffices to prove Theorem \ref{thm:cYasuda-w}.

For $r$, $w\in\mathbb{Z}_{\ge 0}$, we define the $K_N$-linear subspace of $\widetilde{\mathcal{Z}}^N_{w,r}$
\begin{align*}
\mathcal{PD}^{N}_{w,r} :=& \widetilde{\mathcal{Z}}_{w,r-1}^{N} + \sum_{\substack{w_1+w_2 = w \\ w_1,w_2>0}}\sum_{\substack{r_1+r_2\le r \\ r_1,r_2>0}} \widetilde{\mathcal{Z}}_{w_1,r_1}^{N}\cdot \widetilde{\mathcal{Z}}_{w_2,r_2}^{N}.\\
\end{align*}

\begin{lem}[{\cite[Theorem 1.3, Corollary 1.4]{Panzar}}]\label{lem:parity}
For $\mathbf{k} \in \mathbb{Z}_{>0}^r$ with $\wt \mathbf{k} = w$ and $\boldsymbol{\xi} \in \Gamma_N^r$, we have
\[
\zeta^{\shuffle} \left(\begin{matrix} \boldsymbol{\xi} \\ \mathbf{k} \end{matrix} \right)
- (-1)^{w - r}
\zeta^{\shuffle} \left(\begin{matrix} \overline{\boldsymbol{\xi}} \\  \mathbf{k} \end{matrix} \right) \in \mathcal{PD}^{N}_{w,r}. 
\]
Here, let $\overline{\boldsymbol{\xi}}$ be the complex conjugation for all components of $\boldsymbol{\xi}$.
\end{lem}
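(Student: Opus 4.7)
The plan is to establish this cyclotomic parity theorem by the iterated-integral approach of Panzer, combining an iterated-integral representation of $\zeta^{\shuffle}$ with a Möbius substitution that conjugates the cyclotomic parameters $\boldsymbol{\xi}$.

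First, I would write $\zeta^{\shuffle}\!\left(\begin{matrix}\boldsymbol{\xi}\\ \mathbf{k}\end{matrix}\right)$ as an iterated integral $\int_0^1 \omega_{e_1}\cdots \omega_{e_w}$ on $\mathbb{P}^1\setminus(\{0,\infty\}\cup \Gamma_N)$, where each $\omega_{e_i}$ equals either $\omega_0 = \tfrac{dt}{t}$ or $\omega_\eta = \tfrac{dt}{t-\eta}$ for some $\eta\in \Gamma_N$, with exactly $r$ forms of the second type in positions determined by $(\mathbf{k},\boldsymbol{\xi})$. The divergent case $(k_r,\xi_r)=(1,1)$ is handled by the Arakawa--Kaneko shuffle regularization.

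Next, I would apply the Möbius substitution $t\mapsto 1/t$. Direct computation shows $\omega_0 \mapsto -\omega_0$, and, via partial fractions, $\omega_\eta \mapsto \omega_{\overline{\eta}} - \omega_0$ (using $\overline{\eta} = 1/\eta$ for $\eta\in \Gamma_N$). The transformed integration contour must then be pulled back to $[0,1]$ via the iterated-integral path-composition rule, and boundary contributions split off as products of lower-weight iterated integrals. Expanding all $r$ substitutions and retaining only the leading term, namely keeping $\omega_{\overline{\eta}}$ in every one of the $r$ non-$\omega_0$ slots, reproduces $(-1)^{w-r}\zeta^{\shuffle}\!\left(\begin{matrix}\overline{\boldsymbol{\xi}}\\ \mathbf{k}\end{matrix}\right)$, with sign $(-1)^w$ from path reversal combined with $(-1)^r$ from the $r$ minus signs in $\omega_\eta \mapsto \omega_{\overline{\eta}}-\omega_0$.

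Every remaining term in the partial-fraction expansion has strictly fewer than $r$ non-$\omega_0$ factors and therefore lies in $\widetilde{\mathcal{Z}}^N_{w,r-1}$, while the products coming from path decomposition lie in $\sum_{w_1+w_2=w,\,r_1+r_2\le r}\widetilde{\mathcal{Z}}^N_{w_1,r_1}\cdot \widetilde{\mathcal{Z}}^N_{w_2,r_2}$. Together these place the whole error in $\mathcal{PD}^N_{w,r}$, completing the argument.

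The main obstacle, and the bulk of Panzer's proof, is the careful bookkeeping of the shuffle regularization in the divergent case: one must show that the Möbius substitution commutes with the Arakawa--Kaneko regularization modulo terms already known to be in $\mathcal{PD}^N_{w,r}$, which requires analyzing the limiting behavior of tangential base-point integrals at $0$ and $1$. A secondary subtlety is the sign accounting in the path-reversal and partial-fraction steps, which is what pins down the specific exponent $(-1)^{w-r}$.
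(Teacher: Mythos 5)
Your proposal sets out to reprove the cited result of Panzer from scratch, whereas the paper uses it as a black box for admissible indices and only supplies the reduction of the non-admissible, regularized case. That reduction is the entire content of the paper's proof: the shuffle regularization formula $Z_N^{\shuffle}(wx_{\eta}x_1^l) = (-1)^l Z_N^{\shuffle}((w\shuffle x_1^l)x_{\eta})$ rewrites any $\shuffle$-regularized value as a $\mathbb{Z}$-linear combination of honest CMZVs of the same weight and of depth at most $r$; since the coefficients are purely combinatorial in the word and $x_1=x_{\overline{1}}$, the identical combination applies to the conjugated index, and the statement follows from the admissible case. Your treatment of this point --- ``show that the M\"obius substitution commutes with the Arakawa--Kaneko regularization'' --- is both vaguer and strictly harder than what is needed, and is not carried out.

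The from-scratch part also has a genuine gap. The substitution $t\mapsto 1/t$ sends the path $[0,1]$ to a path from $\infty$ to $1$, so after partial fractions you hold iterated integrals along $[1,\infty]$, which are not CMZVs; ``path composition'' does not convert them into integrals over $[0,1]$, because the concatenation $[0,1]\cdot[1,\infty]$ passes through the singular point $1$ and ends at $\infty$, forcing tangential-base-point regularizations and a further change of variable. This is exactly where the substance of Panzer's proof lies, and it is missing here. Two further problems: reversing the orientation of the path reverses the word, so the ``leading term'' your computation produces is (up to sign) $\zeta^{\shuffle}$ of the \emph{reversed} conjugated index $(\overset{\leftarrow}{\overline{\boldsymbol{\xi}}},\overset{\leftarrow}{\mathbf{k}})$ rather than $(\overline{\boldsymbol{\xi}},\mathbf{k})$, and undoing that reversal is the content of the separate antipode relation (Lemma~\ref{lem:antipode}), not a sign adjustment; and your sign count attributes a factor $(-1)^r$ to the leading term of $\omega_{\eta}\mapsto\omega_{\overline{\eta}}-\omega_0$, although the minus sign there sits on $\omega_0$, i.e., on the depth-dropping cross terms, not on $\omega_{\overline{\eta}}$. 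The sound parts of your sketch --- the computation of the pullbacks of the forms and the observation that cross terms lower the depth and hence land in $\mathcal{PD}^{N}_{w,r}$ --- do not close these gaps.
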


\begin{proof}
The case of admissible indices is proved in \cite[Theorem 1.3, Corollary 1.4]{Panzar}.
To consider the remaining case, we consider the $\shuffle$-regularization formula (\cite[Section 2.2]{Arakawa-Kaneko}), i.e. 
\[
Z_{N}^{\shuffle}(wx_{\eta}x_1^l) = (-1)^l Z_N^{\shuffle}((w\shuffle x_1^l)x_{\eta})
\]
for $w\in \cup_{\xi \in \Gamma_N} x_{\xi}X \cup\{1\} $, $\eta\in \{0\}\cup \Gamma_N\setminus \{1\}$ and a positive integer $l\in\mathbb{Z}_{> 0}$. This means, for $\mathbf{k}\in\mathbb{Z}_{>0}^r$ and $\boldsymbol{\xi}\in\Gamma_N^r$, there exists admissible indices $(\mathbf{k}_j, \boldsymbol{\xi}_j) \in \mathbb{Z}_{>0}^r \times \Gamma_N^r$ with $\wt \mathbf{k}_j = \wt \mathbf{k}$ and $c_j \in \mathbb{Z}$ such that
\[
\zeta^{\shuffle} \left(\begin{matrix} \boldsymbol{\xi} \\ \mathbf{k} \end{matrix} \right) = \sum_{j} c_j \zeta \left(\begin{matrix} \boldsymbol{\xi}_j \\ \mathbf{k}_j \end{matrix} \right).
\]
Therefore, our claim is reduced to the admissible case.
\end{proof}

\begin{lem}\label{lem:antipode}
For $\mathbf{k}:= (k_1,\ldots,k_r) \in \mathbb{Z}_{>0}^r$ with  $\wt \mathbf{k} = w$ and $\boldsymbol{\xi} := (\xi_1,\ldots,\xi_r) \in \Gamma_N^r$,
\[
\zeta^{\shuffle}\left(\begin{matrix} \boldsymbol{\xi} \\ \mathbf{k} \end{matrix} \right) + (-1)^r\zeta^{\shuffle} \left(\begin{matrix} \overset{\leftarrow}{\boldsymbol{\xi}} \\ \overset{\leftarrow}{\mathbf{k}} \end{matrix} \right)   \in \mathcal{PD}^{N}_{w,r}.
\]
Here, we set $\overset{\leftarrow}{\mathbf{k}} := (k_r,\ldots,k_1)$ and $\overset{\leftarrow}{\boldsymbol{\xi}} := (\xi_r,\ldots,\xi_1)$.
%\left(\left(\begin{matrix}\overset{\leftarrow}{\mathbf{k}} \\ \overset{\leftarrow}{\boldsymbol{\xi}} \end{matrix}\right)^{\star}\right) 
\end{lem}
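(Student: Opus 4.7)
The sign factorization $(-1)^r = (-1)^w(-1)^{w-r}$ suggests combining the shuffle-antipode identity (contributing $(-1)^w$) with Lemma~\ref{lem:parity} (contributing $(-1)^{w-r}$). Set $v := x_{\xi_1\cdots\xi_r}x_0^{k_1-1}x_{\xi_2\cdots\xi_r}\cdots x_{\xi_r}x_0^{k_r-1}$, so that $\zeta^{\shuffle}(\boldsymbol{\xi};\mathbf{k}) = Z_N^{\shuffle}(v)$, and let $\widetilde v$ denote its word-reversal. In the commutative Hopf algebra $(\ide h_N,\shuffle)$ with deconcatenation coproduct, the antipode is $S(v) = (-1)^{|v|}\widetilde v$; the antipode identity $m\circ(S\otimes\id)\circ\Delta(v) = 0$, combined with the $\shuffle$-algebra homomorphism $Z_N^{\shuffle}$ (extended to $\ide h_N$ via $Z_N^{\shuffle}(x_0)=0$), turns all non-boundary antipode terms into products of $Z_N^{\shuffle}$-values on strictly shorter words, yielding
\[
Z_N^{\shuffle}(v) + (-1)^w Z_N^{\shuffle}(\widetilde v) \in \mathcal{PD}^N_{w,r}.
\]

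Next, the reversed word $\widetilde v = x_0^{k_r-1}x_{\xi_r}\cdots x_0^{k_1-1}x_{\xi_1\cdots\xi_r}$ lies in neither $\ide h_N^0$ nor $\ide h_N^1$, so one rewrites $Z_N^{\shuffle}(\widetilde v)$ in standard CMZV form using (i) the regularization identity $Z_N^{\shuffle}(ux_\eta x_1^l) = (-1)^l Z_N^{\shuffle}((u\shuffle x_1^l)x_\eta)$ from the proof of Lemma~\ref{lem:parity} to handle the trailing letter in the non-admissible case, together with (ii) iterated use of $Z_N^{\shuffle}(x_0)=0$ in $x_0\shuffle u = x_0u + \cdots$ to migrate the leading $x_0$'s through the word. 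Tracking signs and the partial-product convention $\eta_i = \xi_i\cdots\xi_r$ under block-reversal should yield
\[
Z_N^{\shuffle}(\widetilde v) \equiv \zeta^{\shuffle}\left(\overline{\overset{\leftarrow}{\boldsymbol{\xi}}};\,\overset{\leftarrow}{\mathbf{k}}\right) \pmod{\mathcal{PD}^N_{w,r}},
\]
the complex-conjugate reversal reflecting the natural interplay of reversal and conjugation in cyclotomic iterated integrals; one verifies this already for $r=1$, where the explicit computation $Z_N^{\shuffle}(x_0^{k-1}x_\xi) = (-1)^{k-1}\zeta^{\shuffle}(\xi;k)$ combined with parity $\zeta^{\shuffle}(\xi;k)\equiv(-1)^{k-1}\zeta^{\shuffle}(\bar\xi;k)$ gives $Z_N^{\shuffle}(\widetilde v) \equiv \zeta^{\shuffle}(\bar\xi;k)$ exactly. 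Applying Lemma~\ref{lem:parity} to $(\overline{\overset{\leftarrow}{\boldsymbol{\xi}}};\overset{\leftarrow}{\mathbf{k}})$ then converts the conjugation into a sign $(-1)^{w-r}$, and assembling everything delivers the claim with total sign $(-1)^w(-1)^{w-r}=(-1)^r$.

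The main obstacle is the identification of $Z_N^{\shuffle}(\widetilde v)$ modulo $\mathcal{PD}^N_{w,r}$ in the second step: the reversed word has its block structure backwards and begins with $x_0$'s, so putting it into standard CMZV form requires delicate combinatorial manipulations with the shuffle-regularization identities and careful bookkeeping of how the partial-product encoding transforms under block reversal. An alternative that avoids this identification is induction on depth, with $r=1$ trivial, $r=2$ following directly from the harmonic product $\zeta(\xi_1;k_1)\zeta(\xi_2;k_2) = \zeta(\xi_1,\xi_2;k_1,k_2)+\zeta(\xi_2,\xi_1;k_2,k_1)+\zeta(\xi_1\xi_2;k_1+k_2)$; for the inductive step one would compare the harmonic products $\zeta^*(\xi_r;k_r)\cdot\zeta^*(\xi_1,\ldots,\xi_{r-1};k_1,\ldots,k_{r-1})$ and $\zeta^*(\xi_r;k_r)\cdot\zeta^*(\xi_{r-1},\ldots,\xi_1;k_{r-1},\ldots,k_1)$, use the inductive hypothesis to relate the two ``cyclic sum'' relations modulo $\mathcal{PD}^N_{w,r}$, and isolate the boundary insertions to extract the depth-$r$ reversal identity.
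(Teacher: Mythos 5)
There is a genuine gap in the central step of your main route. The paper proves this lemma with the antipode of the \emph{harmonic} (stuffle) algebra, namely the Hoffman--Ihara identity $\sum_{j=0}^{r}(-1)^{r-j}\,\zeta^{*}(\xi_1,\ldots,\xi_j;k_1,\ldots,k_j)\,\zeta^{*,\star}(\xi_r,\ldots,\xi_{j+1};k_r,\ldots,k_{j+1})=0$: there the reversal of the \emph{index} appears directly, the middle terms are products, and the $\star$-correction terms have depth $<r$, so everything lands in $\mathcal{PD}^{N}_{w,r}$ at once (no parity lemma and no complex conjugation are needed). You instead use the antipode of the \emph{shuffle} algebra, where the antipode reverses the \emph{word}, and word reversal is not index reversal. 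Concretely, $\widetilde v = x_0^{k_r-1}x_{\eta_r}\cdots x_0^{k_1-1}x_{\eta_1}$ with $\eta_i=\xi_i\cdots\xi_r$; pushing the leading $x_0$'s to the right via $Z_N^{\shuffle}(x_0)=0$ produces a $\mathbb{Z}$-linear combination of \emph{several} depth-$r$, weight-$w$ CMZVs whose root-of-unity vector is $(\overline{\xi_{r-1}},\ldots,\overline{\xi_{1}},\xi_1\cdots\xi_r)$ and whose exponent vectors are redistributions of the total weight --- not the single value $\zeta^{\shuffle}(\overline{\xi_r},\ldots,\overline{\xi_1};k_r,\ldots,k_1)$. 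For instance, with $r=2$ and $k_1=k_2=2$ one finds
\[
Z_N^{\shuffle}(\widetilde v)=-2\,\zeta\!\left(\begin{smallmatrix}\overline{\xi_1},\ \xi_1\xi_2\\ 3,\ 1\end{smallmatrix}\right)-\zeta\!\left(\begin{smallmatrix}\overline{\xi_1},\ \xi_1\xi_2\\ 2,\ 2\end{smallmatrix}\right),
\]
and identifying this with $\zeta\!\left(\begin{smallmatrix}\overline{\xi_2},\ \overline{\xi_1}\\ 2,\ 2\end{smallmatrix}\right)$ modulo $\mathcal{PD}^{N}_{4,2}$ is not elementary bookkeeping: it is a depth-graded relation essentially as strong as the lemma itself. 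Your $r=1$ check is misleading because in depth one there is only a single redistribution target. So the asserted congruence $Z_N^{\shuffle}(\widetilde v)\equiv\zeta^{\shuffle}\bigl(\overline{\overset{\leftarrow}{\boldsymbol{\xi}}};\overset{\leftarrow}{\mathbf{k}}\bigr)$ is precisely the content of the lemma and is left unproved.

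The fallback induction also does not close. Expanding $\zeta^{*}(\xi_r;k_r)\cdot\zeta^{*}(\xi_1,\ldots,\xi_{r-1};k_1,\ldots,k_{r-1})$ produces, besides the target $\zeta(\xi_1,\ldots,\xi_r;\mathbf{k})$ and the contraction terms (which have depth $r-1$ and hence lie in $\mathcal{PD}^{N}_{w,r}$), all the \emph{other insertions} of $(\xi_r;k_r)$ into the interior of the index; these are again of depth $r$ and weight $w$, and cancelling them against the corresponding terms of the reversed product requires the reversal statement in depth $r$, i.e.\ the lemma itself. The systematic repair is exactly the full alternating sum over all cut points with $\zeta^{*,\star}$ in the second factor --- the Hoffman--Ihara identity the paper invokes --- together with the congruences $\zeta^{*,\star}\equiv\zeta^{*}\equiv\zeta^{\shuffle}$ modulo $\mathcal{PD}^{N}_{w,r}$.
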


\begin{proof}
For  $(k_1.\ldots.k_r)\in\mathbb{Z}_{>0}^r$ and $(\xi_1,\ldots,\xi_r)\in \Gamma_N^r$, we have the following relation (\cite[Theorem 4.2]{Hoffman-Ihara})
\[
\sum_{j = 0}^r (-1)^{r -j} \chmzv{\xi_1}{\xi_j}{k_1}{k_j} \zeta^{*,\star}\left(\begin{matrix}\xi_r\hspace{-0.3cm}&,\ldots,&\hspace{-0.3cm} \xi_{j+1} \\  k_r \hspace{-0.3cm}&,\ldots,&\hspace{-0.3cm}  k_{j+1}   \end{matrix}\right) = 0.
\]
Here, $\zeta^*$ (resp. $\zeta^{*,\star}$) is the harmonic regularized multiple zeta (resp. zeta star) value (see \cite[Section 1.2]{Arakawa-Kaneko}). Since
\[
\zeta^{*,\star}\left(\begin{matrix}  \boldsymbol{\xi} \\ \mathbf{k}  \end{matrix}\right) \equiv 
\zeta^{*}\left(\begin{matrix}  \boldsymbol{\xi} \\ \mathbf{k}  \end{matrix}\right) \equiv
\zeta^{\shuffle}\left(\begin{matrix}  \boldsymbol{\xi} \\ \mathbf{k}  \end{matrix}\right)
\]
holds for $\mathbf{k}\in\mathbb{Z}_{>0}^r$ with $\wt \mathbf{k} = w$ and $\boldsymbol{\xi} \in \Gamma_N^r$,
we get our conclusion.

\end{proof}

\begin{lem}\label{lem:cYasuda-pd}
Theorem \ref{thm:cYasuda-w} holds under the assumption that $\widetilde{\mathcal{Z}}_{w,r}^N/(\pi i\widetilde{\mathcal{Z}}_{w-1,r}^N + \mathcal{PD}_{w,r}^N)$ is linearly spanned by the image of $\shuffle$-CSMZVs.
\end{lem}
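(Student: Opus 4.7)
The plan is a double induction, outer on the weight $w$ and inner on the depth $r$, proving the stronger filtered statement
\[
\widetilde{\mathcal{Z}}_{w,r}^N \subseteq \pi i\,\widetilde{\mathcal{Z}}_{w-1}^N + \mathcal{Z}_w^{\S(N,\alpha),\shuffle} \qquad (w,r\ge 0).
\]
Taking the union over $r$ (using $\widetilde{\mathcal{Z}}_w^N=\bigcup_{r\ge 0}\widetilde{\mathcal{Z}}_{w,r}^N$) then gives Theorem \ref{thm:cYasuda-w}. The outer base case $w=0$ is immediate because $\widetilde{\mathcal{Z}}_0^N=K_N$ is spanned by $1$, which is the value of the $\shuffle$-CSMZV at the empty index. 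At each weight $w\ge 1$, the inner base case $r=0$ is also immediate because $\widetilde{\mathcal{Z}}_{w,0}^N = K_N(\pi i)^w \subseteq \pi i\,\widetilde{\mathcal{Z}}_{w-1}^N$.

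For the inductive step at a pair $(w,r)$ with $w,r\ge 1$, the hypothesis of the lemma yields
\[
\widetilde{\mathcal{Z}}_{w,r}^N \subseteq \pi i\,\widetilde{\mathcal{Z}}_{w-1,r}^N + \mathcal{PD}_{w,r}^N + \mathcal{Z}_w^{\S(N,\alpha),\shuffle}.
\]
The first summand sits trivially in $\pi i\,\widetilde{\mathcal{Z}}_{w-1}^N$. Unpacking $\mathcal{PD}_{w,r}^N$, the piece $\widetilde{\mathcal{Z}}_{w,r-1}^N$ is absorbed by the inner hypothesis, and each product $\widetilde{\mathcal{Z}}_{w_1,r_1}^N\cdot\widetilde{\mathcal{Z}}_{w_2,r_2}^N$ with $w_1+w_2=w$ and $w_1,w_2>0$ is handled by the outer hypothesis applied to each factor, which supplies
\[
\widetilde{\mathcal{Z}}_{w_i}^N \subseteq \pi i\,\widetilde{\mathcal{Z}}_{w_i-1}^N + \mathcal{Z}_{w_i}^{\S(N,\alpha),\shuffle} \qquad (i=1,2).
\]
Expanding the product, every cross term carries an explicit factor of $\pi i$ and lands in $\pi i\,\widetilde{\mathcal{Z}}_{w-1}^N$, so one is reduced to controlling the single surviving summand $\mathcal{Z}_{w_1}^{\S(N,\alpha),\shuffle}\cdot\mathcal{Z}_{w_2}^{\S(N,\alpha),\shuffle}$.

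The only non-formal step is the inclusion
\[
\mathcal{Z}_{w_1}^{\S(N,\alpha),\shuffle}\cdot\mathcal{Z}_{w_2}^{\S(N,\alpha),\shuffle} \subseteq \mathcal{Z}_w^{\S(N,\alpha),\shuffle} + \pi i\,\widetilde{\mathcal{Z}}_{w-1}^N,
\]
which follows from the $K_N$-algebra structure on $\mathcal{Z}^{\S(N,\alpha)}$ recorded in \eqref{eq:harCSMZV}, i.e.\ the harmonic product relations for $*$-CSMZVs together with the comparison in Remark \ref{rmk:CSMZV-S-H}, combined with the fact that the harmonic product is weight-homogeneous and hence the $\pi i$-error automatically lies in the weight-$(w-1)$ piece of $\widetilde{\mathcal{Z}}^N$. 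Combining all contributions gives $\widetilde{\mathcal{Z}}_{w,r}^N \subseteq \pi i\,\widetilde{\mathcal{Z}}_{w-1}^N+\mathcal{Z}_w^{\S(N,\alpha),\shuffle}$, closing both inductions. I anticipate no genuine obstacle here: the entire argument is formal bookkeeping on the two filtrations, and the only care required is tracking the weight filtration through the product expansion so that the \eqref{eq:harCSMZV} algebra step really lands in $\pi i\,\widetilde{\mathcal{Z}}_{w-1}^N$ rather than a larger error term.
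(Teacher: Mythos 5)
Your proposal is correct and follows essentially the same route as the paper: a double induction on weight and depth in which the hypothesis reduces everything to the $\mathcal{PD}^N_{w,r}$ terms, the depth-$(r-1)$ piece is absorbed by the inner induction, and the products $\widetilde{\mathcal{Z}}^N_{w_1,r_1}\cdot\widetilde{\mathcal{Z}}^N_{w_2,r_2}$ with $w_1,w_2>0$ are handled by the outer induction together with the harmonic product relation \eqref{eq:harCSMZV}. Your version is if anything slightly more careful than the paper's, in spelling out the base cases and in tracking that the $\pi i$-errors land in the weight-$(w-1)$ filtration step.
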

\begin{proof}
Let us define the $K_N$-linear subspace $\widetilde{\mathcal{PD}}^{N}_{w,r} $ of $(\widetilde{\mathcal{Z}}_{w,r}^N/\pi i\widetilde{\mathcal{Z}}_{w-1,r}^N)$ as the image of $\mathcal{PD}^{N}_{w,r}$ in $\widetilde{\mathcal{Z}}_{w,r}/\pi i \widetilde{\mathcal{Z}}_{w-1,r}$.
\begin{comment}
\[
\widetilde{\mathcal{PD}}^{N}_{w,r} := \widetilde{\mathcal{Z}}_{w,r-1}^{N}/ (\pi i \widetilde{\mathcal{Z}}_{w-1,r-1}^{N}) + \sum_{\substack{w_1+w_2 = w \\ w_1,w_2>0}}\sum_{r_1+r_2\le r} (\widetilde{\mathcal{Z}}_{w_1,r_1}^{N}/\pi i \widetilde{\mathcal{Z}}_{w_1-1,r_1}^{N})\cdot (\widetilde{\mathcal{Z}}_{w_2,r_2}^{N}/\pi i \widetilde{\mathcal{Z}}_{w_2-1,r_2}^{N}).
\]
\end{comment}
Then we can check 
\[
\widetilde{\mathcal{Z}}_{w,r}^N/(\pi i\widetilde{\mathcal{Z}}_{w-1,r}^N + \mathcal{PD}_{w,r}^N)
\cong
(\widetilde{\mathcal{Z}}_{w,r}^N/\pi i\widetilde{\mathcal{Z}}_{w-1,r}^N)/ \widetilde{\mathcal{PD}}^{N}_{w,r}.
\]
Therefore, our assumption can be rephrased as ``$(\widetilde{\mathcal{Z}}_{w,r}^N/\pi i\widetilde{\mathcal{Z}}_{w-1,r}^N)/ \widetilde{\mathcal{PD}}^{N}_{w,r}$ is linearly spanned by the image of CSMZVs."
We prove $\mathcal{Z}_{w}^{\S(N,\alpha)}$ is linearly spanned by its image of CSMZVs by the double induction on $w$ and $r$.

For $\mathbf{k}\in \mathbb{Z}_{>0}^r$ and $\boldsymbol{\xi}\in\Gamma_N^r$, there exists $s\in \mathcal{Z}^{\S(N,\alpha)}_w$ such that
\[
\zeta\left(\begin{matrix} \boldsymbol{\xi} \\ \mathbf{k} \end{matrix} \right) - s \equiv 
\sum_{\mathbf{k}', \boldsymbol{\xi}'} c_{\mathbf{k}', \boldsymbol{\xi}'}    \zeta\left(\begin{matrix}\boldsymbol{\xi}' \\  \mathbf{k}'  \end{matrix} \right)
+
\sum_{\substack{\mathbf{k}'', \mathbf{k}''' \\ \boldsymbol{\xi}'', \boldsymbol{\xi}'''}}  c_{\substack{\mathbf{k}'', \mathbf{k}''' \\ \boldsymbol{\xi}'', \boldsymbol{\xi}'''}}\zeta\left(\begin{matrix}\boldsymbol{\xi}''  \\ \mathbf{k}'' \end{matrix} \right) \times \zeta\left(\begin{matrix}\boldsymbol{\xi}''' \\ \mathbf{k}''' \end{matrix} \right) 
\]
modulo $\pi i \widetilde{\mathcal{Z}}_{w-1,r}^N$,
where $c_{\bullet}$ in $K_N$, 
the first summation of the above runs to $\wt \mathbf{k}' = w$ and $0\le \dep \mathbf{k}' = \dep \boldsymbol{\xi}' \le r-1$
and the second summation of the above runs to $\wt \mathbf{k}'' + \wt \mathbf{k}''' = w$, $\dep \boldsymbol{\xi}'' = \dep \mathbf{k}''$, $\dep \boldsymbol{\xi}''' = \dep \mathbf{k}'''$ and $0\le \dep \mathbf{k}'' + \dep \mathbf{k}''' \le r$,
i.e. the first summation is the image of $\widetilde{\mathcal{Z}}_{w,r-1}^{N}$ in $\widetilde{\mathcal{Z}}_{w,r}^N/\pi i\widetilde{\mathcal{Z}}_{w-1,r}^N$ and the second one of the above is also the image of $\sum_{\substack{w_1+w_2 = w \\ w_1,w_2>0}}\sum_{\substack{r_1+r_2\le r \\ r_1,r_2>0}} \widetilde{\mathcal{Z}}_{w_1,r_1}^{N}\cdot \widetilde{\mathcal{Z}}_{w_2,r_2}^{N} $ in the same space.

Since $\wt \mathbf{k}''$, $\wt \mathbf{k}'''<w$ and CSMZVs are closed under the product of CSMZVs, 
the assumption of the induction implies the right-hand side of the above is a $K_N$-linear combination of CSMZVs. 
\end{proof}

\begin{proof}[Proof of Theorem \ref{thm:cYasuda-w}]

By Lemma \ref{lem:cYasuda-pd} it suffices to show $\widetilde{\mathcal{Z}}_{w,r}^N/(\pi i\widetilde{\mathcal{Z}}_{w-1,r}^N + \mathcal{PD}_{w,r}^N)$ is linearly spanned by the image of $\shuffle$-CSMZVs.

Let $(k_1,\ldots,k_r)\in\mathbb{Z}_{>0}^r$ and $(\xi_1,\ldots,\xi_r)\in \Gamma_N^r$.

Our proof is splitted into the following two steps:
\begin{enumerate}
\item If $\xi_1\cdots \xi_r \neq 1$, then  $\displaystyle \csmzv{\xi_1}{\xi_r}{k_1}{k_r} \mod \pi i\widetilde{\mathcal{Z}}_{w-1,r}^N + \mathcal{PD}_{w,r}^N$  is a $K_N$-linear combination of the image of  $\shuffle$-CSMZVs.
\item The general cases are reduced to the first step and Theorem \ref{thm:Yasuda-1}.
\end{enumerate}

First, we prove that  $\displaystyle \csmzv{\xi_1}{\xi_r}{k_1}{k_r} \mod \pi i \widetilde{\mathcal{Z}}_{w-1}^N + \mathcal{PD}_{w,r}^N$ with $\xi_1\cdots \xi_r \neq 1$ is a $K_N$-linear combination of the image of $\shuffle$-CSMZVs.
We have 

\begin{align*}
\cSmzv{\xi_1}{\xi_r}{k_1}{k_r} \equiv& \csmzv{\xi_1}{\xi_r}{k_1}{k_r} + (-1)^{k_1+\cdots + k_r} (\xi_1\cdots \xi_r)^{\alpha} \csmzv{\overline{\xi_r}}{\overline{\xi_1}}{k_r}{k_1}\\
\equiv & \csmzv{\xi_1}{\xi_r}{k_1}{k_r} + (-1)^{r}(\xi_1\cdots \xi_r)^{\alpha} \csmzv{\xi_r}{\xi_1}{k_r}{k_1}\\
\equiv & \csmzv{\xi_1}{\xi_r}{k_1}{k_r} - (\xi_1\cdots \xi_r)^{\alpha}\csmzv{\xi_1}{\xi_r}{k_1}{k_r}\\
\equiv& (1 -(\xi_1\cdots \xi_r)^{\alpha}) \csmzv{\xi_1}{\xi_r}{k_1}{k_r} \mod \pi i \widetilde{\mathcal{Z}}_{w-1}^N + \mathcal{PD}_{w,r}^N.
\end{align*}
Now we can use Lemma \ref{lem:parity} in the second equality and Lemma \ref{lem:antipode} in the third equality. 
Since $\xi_1\cdots \xi_r \neq 1$, $1 - (\xi_1 \cdots \xi_r)^{\alpha} \neq 0$ by  $\alpha\in (\mathbb{Z}/N\mathbb{Z})^{\times}$ and our 1-st step is completed.

Second, we consider the general case.
It suffices to exclude the case $\xi_1 = \cdots = \xi_r = 1$, as this case follows directly from Theorem \ref{thm:Yasuda-1}.
Since
\begin{align}
\csmzv{\xi_1}{\xi_r}{k_1}{k_r} = Z_N^{\shuffle}(x_{\xi_1\cdots \xi_r}x_0^{k_1-1}\cdots x_{\xi_r}x_0^{k_r-1}) \label{eq:1-st}
\end{align}
holds, $\displaystyle \csmzv{\xi_1}{\xi_r}{k_1}{k_r}$ with $\xi_1\cdots \xi_r  = 1$ is reduced to $Z_N^{\shuffle}(x_{\eta_1}x_0^{k_1-1}\cdots x_{\eta_r}x_0^{k_r-1})$ for $\eta_1,\ldots,\eta_r\in \Gamma_N$ with $\eta_1 = 1$.

We prove, by induction on $l\in\mathbb{Z}_{\ge 0},$ that $Z^{\shuffle}_N(x_{i_1} \cdots x_{i_l} x_{\xi_1} x_0^{k_1-1} \cdots x_{\xi_r} x_0^{k_r-1}) \mod \pi i \widetilde{\mathcal{Z}}_{w-1}^N + \mathcal{PD}_{w,r}^N$ ($i_2, \ldots, i_l \in \{0, 1\}$, $x_{i_1} = x_1$, and $\xi_1 \neq 1$) can be expressed as a $K_N$-linear combination of
\[
Z_N^{\shuffle}(x_{\xi_1'} x_0^{k_1'-1} \cdots x_{\xi_r'} x_0^{k_{r'}'-1}) \mod \pi i \widetilde{\mathcal{Z}}_{w-1}^N + \mathcal{PD}_{w,r}^N,
\]
where $k_1', \ldots, k_{r'}' \in \mathbb{Z}_{>0}$ and $\xi_1', \ldots, \xi{r'}' \in \Gamma_N$ with $\xi_1' \neq 1$.
Let us assume that the claim holds for all $l' < l$.

Since there exists $w_{j}\in K_N\langle X\rangle$ ($ j = 0,\ldots,l-1$) and $w\in K_N\langle X\rangle$ such that
\begin{align*}
&x_1x_{i_1}\cdots x_{i_l} x_{\xi_1}x_0^{k_1-1}\cdots x_{\xi_r}x_0^{k_r-1}\\
=&x_1x_{i_1}\cdots x_{i_l} \shuffle x_{\xi_1}x_0^{k_1-1}\cdots x_{\xi_r}x_0^{k_r-1} - \sum_{0\le j < l} x_1x_{i_1}\cdots x_{i_{j}}x_{\xi_1}w_{j} - x_{\xi_1}w
\end{align*}
holds, then we have
\begin{align*}
&Z^{\shuffle}_N(x_1x_{i_1}\cdots x_{i_l} x_{\xi_1}x_0^{k_1-1}\cdots x_{\xi_r}x_0^{k_r-1})\\
=&Z^{\shuffle}_N(x_1x_{i_1}\cdots x_{i_l}) Z^{\shuffle}_N( x_{\xi_1}x_0^{k_1-1}\cdots x_{\xi_r}x_0^{k_r-1})  - \sum_{0\le j < l} Z^{\shuffle}_N(x_1x_{i_1}\cdots x_{i_{j}}x_{\xi_1}w_{j} )- Z^{\shuffle}_N( x_{\xi_1}w)\\
\equiv 
 &- \sum_{0\le j < l} Z^{\shuffle}_N(x_1x_{i_1}\cdots x_{i_{j}}x_{\xi_1}w_{j} )- Z^{\shuffle}_N( x_{\xi_1}w).
\end{align*}
The last congruence is taken modulo $\pi i \widetilde{\mathcal{Z}}_{w-1}^N + \mathcal{PD}_{w,r}^N$.
Since $1\le j \le l-1$, the assumption of the induction implies $\displaystyle \sum_{0\le j < l} Z^{\shuffle}_N(x_1x_{i_1}\cdots x_{i_{j}}x_{\xi_1}w_{j} ) \mod \pi i \widetilde{\mathcal{Z}}_{w-1}^N + \mathcal{PD}_{w,r}^N$ can be written as a $ K_N$-linear combination of $Z_N^{\shuffle}(x_{\xi_1'}x_0^{k_1'-1}\cdots x_{\xi'_r}x_0^{k_{r'}'-1}) \mod \pi i \widetilde{\mathcal{Z}}_{w-1}^N + \mathcal{PD}_{w,r}^N$ with $\xi_1' \neq 1$.
Therefore, we get the conclusion.

\end{proof}

\begin{cor}\label{cor:cYasuda-sh}
We have
\[
\widetilde{\mathcal{Z}}^N =  \mathcal{Z}^{\S(N,\alpha),\shuffle}[\pi i].
\]
More precisely, we have
\[
\widetilde{\mathcal{Z}}^N_w =  \sum_{j = 0}^w (\pi i)^j \mathcal{Z}^{\S(N,\alpha),\shuffle}_{w-j}.
\]
 for $w\in\mathbb{Z}_{\ge 0}$.
\end{cor}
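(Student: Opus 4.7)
The plan is to prove the refined (weight-graded) identity $\widetilde{\mathcal{Z}}^N_w = \sum_{j=0}^w (\pi i)^j \mathcal{Z}^{\S(N,\alpha),\shuffle}_{w-j}$ by induction on $w \ge 0$, and then deduce the ungraded identity $\widetilde{\mathcal{Z}}^N = \mathcal{Z}^{\S(N,\alpha),\shuffle}[\pi i]$ by summing over all weights, since $\widetilde{\mathcal{Z}}^N = \bigcup_{w \ge 0} \widetilde{\mathcal{Z}}^N_w$ and $\mathcal{Z}^{\S(N,\alpha),\shuffle}[\pi i] = \sum_{w,j \ge 0} (\pi i)^j \mathcal{Z}^{\S(N,\alpha),\shuffle}_{w}$. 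The base case $w = 0$ is immediate: both sides reduce to $K_N$ because the empty $\shuffle$-CSMZV is declared to be $1$.

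For the inductive step at weight $w \ge 1$, the inclusion $\supseteq$ is automatic, since each $(\pi i)^j \mathcal{Z}_{w-j}^{\S(N,\alpha),\shuffle}$ sits inside $\widetilde{\mathcal{Z}}_w^N$ by the definition of the weight filtration. For $\subseteq$, I would take an arbitrary $z \in \widetilde{\mathcal{Z}}_w^N$ and invoke Theorem \ref{thm:cYasuda-w}, which places the image of $z$ in the quotient $\widetilde{\mathcal{Z}}_w^N / \pi i \widetilde{\mathcal{Z}}_{w-1}^N$ inside $\mathcal{Z}_w^{\S(N,\alpha)}$. Since $\mathcal{Z}_w^{\S(N,\alpha)}$ is by definition the image of $\mathcal{Z}_w^{\S(N,\alpha),\shuffle}$ under reduction modulo $\pi i$, this image lifts to some $\widetilde{s} \in \mathcal{Z}_w^{\S(N,\alpha),\shuffle}$ with $z - \widetilde{s} = \pi i \cdot u$ for some $u \in \widetilde{\mathcal{Z}}_{w-1}^N$. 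Applying the inductive hypothesis to $u$ gives
\[
u \in \sum_{j=0}^{w-1} (\pi i)^j \mathcal{Z}^{\S(N,\alpha),\shuffle}_{w-1-j},
\]
and multiplying by $\pi i$ reindexes this into $\sum_{j=1}^{w} (\pi i)^j \mathcal{Z}^{\S(N,\alpha),\shuffle}_{w-j}$. Adding back $\widetilde{s} \in \mathcal{Z}^{\S(N,\alpha),\shuffle}_w$ (the $j=0$ term) produces the desired membership, closing the induction.

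I do not anticipate any genuine obstacle here, as the argument is entirely formal once Theorem \ref{thm:cYasuda-w} is in hand. The only subtlety worth flagging is the bookkeeping distinction between CSMZVs, which a priori are elements of the quotient $\widetilde{\mathcal{Z}}^N / \pi i \widetilde{\mathcal{Z}}^N$, and $\shuffle$-CSMZVs, which are concrete lifts in $\widetilde{\mathcal{Z}}^N$; the lift of $\widetilde{s}$ used above exists tautologically from the definition of $\mathcal{Z}_w^{\S(N,\alpha)}$ as the reduction of $\mathcal{Z}_w^{\S(N,\alpha),\shuffle}$, so no further regularization work is required.
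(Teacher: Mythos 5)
Your proof is correct and follows essentially the same route as the paper: apply Theorem \ref{thm:cYasuda-w} to write $z = s_w + \pi i\, z_{w-1}$ with $s_w \in \mathcal{Z}^{\S(N,\alpha),\shuffle}_w$ a lift of the CSMZV expression and $z_{w-1} \in \widetilde{\mathcal{Z}}^N_{w-1}$, then conclude by induction on the weight. The only difference is cosmetic — you spell out the base case and the passage from the graded to the ungraded statement, which the paper leaves implicit.
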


\begin{proof}
We prove this claim by induction on $w$. Assume our induction hypothesis holds less than $w' < w$ for fixed $w\in\mathbb{Z}_{>0}$.
Let $z \in \widetilde{Z}_{w}^N$. By Theorem \ref{thm:cYasuda-w} and the definition of CSMZV, there exists $s_w \in \mathcal{Z}^{\S(N,\alpha),\shuffle}_w$ and $z_{w-1} \in \widetilde{Z}_{w-1}^N$ such that
\[
z = s_w + \pi i \cdot  z_{w-1}
\]
By the assumption of the induction, there exists $s_j \in \mathcal{Z}^{\S(N,\alpha),\shuffle}_{w-j}$ $(0\le j \le w-1)$ such that
$z_{w-1} = \sum_{j = 0}^{w-1} (\pi i)^{w-j-1}s_j$. Therefore, our claim is completed.
\end{proof}

\begin{rmk}
In \cite[Lemma 3.10]{Furusho-Komiyama}, Lemma 4.1 is proved using only the double shuffle relations. Therefore, the above proof can also work for analogs of multiple zeta values that satisfy the double shuffle relations. For instance, the cyclotomic motivic multiple zeta values and the formal cyclotomic multiple zeta values.
\end{rmk}

\begin{rmk}\label{rmk:counter-example}
The equality \eqref{eq:cYasu-w} does not hold in the case of $\alpha = 0$. We can verify $\zeta^{\S(N,0)}\left( \begin{matrix} \xi \\ 1 \end{matrix}\right) = 0$ for $N\in\mathbb{Z}_{>0}$ and $\xi\in \Gamma_N$. This implies the $K_N$-linear space generated by CSMZV with weight $1$ is equal to $0$.
However, according to the Baker's theorem (\cite[Theorem 2.1]{Baker}), since a pair $\zeta \left( \begin{matrix} \xi \\ 1 \end{matrix} \right) = - \log (1-\xi)$ and $\pi i$ is $\mathbb{Q}$-linear independent for $N \ge 2$ and $N\neq 6$, a triple $1$, $-\log(1-\xi)$ and $\pi i$ is $\overline{\mathbb{Q}}$-linear independent. This implies the $K_N$-linear space generated by CMZV with weight $1$  modulo $\pi i$ is a non-trivial space and provides a counter-example to Theorem \ref{thm:cYasuda-w} for $\alpha = 0$.
Therefore, since we expect that all the spaces involved are graded
\[
\widetilde{\mathcal{Z}}^{N} = \bigoplus_{w\ge 0} \widetilde{\mathcal{Z}}_w^{N},
\]
this would provide a counter-example to \eqref{eq:Yasu}.

\end{rmk}

\section{Refined symmetric multiple zeta values and generalization of Theorem \ref{thm:cYasuda}}\label{sec:RSCMZV}

%Theorem \ref{thm:cYasuda} can be further refined by using the refined symmetric multiple zeta values. In this section, we will discuss refined cyclotomic symmetric multiple zeta values defined independently by Tasaka and Jarossay and explain the refinement of the theorem.
In this section, we describe a refinement of Theorem \ref{thm:cYasuda}.
Tasaka introduced refined cyclotomic symmetric multiple zeta values (RCSMZVs) in \cite{Tasaka}.
The original case was studied in \cite{Jarossay}, while independently, the case $N = 1$ is considered in \cite{Hirose}, and Tasaka extended it to general $N > 0$.

For $(k_1,\ldots,k_r)\in\mathbb{Z}_{>0}^r$ and $(\xi_1,\ldots,\xi_r)\in \Gamma_N^r$,
Tasaka (\cite{Tasaka}) defines the refined cyclotomic symmetric multiple zeta values $\crsmzv{\xi_1}{\xi_r}{k_1}{k_r}$, which is the lift of our CSMZVs, as the element of $ \widetilde{\mathcal{Z}}^N$.
Then we have
\begin{align}
\zeta_{\RS(N,\alpha)}\left(\begin{matrix} \boldsymbol{\xi} \\ \mathbf{k} \end{matrix}\right) \equiv &
\zeta_{\S(N,\alpha)}^{\shuffle}\left(\begin{matrix} \boldsymbol{\xi} \\ \mathbf{k} \end{matrix}\right) \mod  \pi i\widetilde{\mathcal{Z}}^N.
\label{eq:RS=S}
\end{align}
for each $\mathbf{k}\in\mathbb{Z}_{>0}^r$ and $\boldsymbol{\xi}\in\Gamma_N^r$ (see \cite[Definition 3.1]{Tasaka} and Remark \ref{rmk:CSMZV-S-H}).

Let $\mathcal{Z}^{\RS(N,\alpha)}$ be a $K_N$-linear subspace of $ \widetilde{\mathcal{Z}}^N$ generated by RCSMZVs.
We define $K_N$-linear subspace $\mathcal{Z}_w^{\RS(N,\alpha)}$ of $\widetilde{\mathcal{Z}}^N$ by
\begin{align*}
\mathcal{Z}_{w}^{\RS(N,\alpha)} =& \Span_{K_N} \left\{  \zeta_{\RS(N,\alpha)}\left(\begin{matrix} \boldsymbol{\xi} \\ \mathbf{k} \end{matrix}\right)  
\middle|  \wt \mathbf{k}  = w\right\}.
\end{align*}
The ``harmonic product relations" of RCSMZV show that $\mathcal{Z}^{\RS(N,\alpha)}$ is a $K_N$-algebra (\cite[Proposition 5.2]{Tasaka}), i.e.
\begin{align}
\zeta_{\RS(N,\alpha)}\left(\begin{matrix} \boldsymbol{\xi}_1 \\ \mathbf{k}_1 \end{matrix} \right)  \zeta_{\RS(N,\alpha)}\left(\begin{matrix} \boldsymbol{\xi}_2 \\ \mathbf{k}_2 \end{matrix} \right) \in \mathcal{Z}^{\RS(N,\alpha)} \label{eq:RSds}
\end{align}
for $\mathbf{k}_1\in \mathbb{Z}_{>0}^{d_1}$, $ \boldsymbol{\xi}_1 \in \Gamma_N^{d_1}$, $\mathbf{k}_2\in \mathbb{Z}_{>0}^{d_2}$ and $ \boldsymbol{\xi}_2 \in \Gamma_N^{d_2}$ ($d_1$, $d_2\in\mathbb{Z}_{>0}$).

We can generalize Theorem \ref{thm:cYasuda-w} in the following theorem, which settles a special case of a problem proposed for arbitrary $\alpha \in \mathbb{Z}/N\mathbb{Z}$ in \cite[(6.1)]{Tasaka}.
\begin{thm}\label{thm:cYasuda-r}
Let $N\in\mathbb{Z}_{>0}$ and $\alpha\in(\mathbb{Z}/N\mathbb{Z})^{\times}$.
For $w\in\mathbb{Z}_{>0}$, we have
\begin{align}
\mathcal{Z}_w^{\RS(N,\alpha)} =  \widetilde{\mathcal{Z}}_{w}^N. \label{eq:cYasuda-refinement}
\end{align}
\end{thm}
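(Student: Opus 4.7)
The plan is to induct on $w$. The reverse inclusion $\mathcal{Z}_w^{\RS(N,\alpha)} \subset \widetilde{\mathcal{Z}}_w^N$ is immediate from the fact that RCSMZVs of weight $w$ are lifts living in $\widetilde{\mathcal{Z}}_w^N$, so only the opposite direction requires work. For this, given $z \in \widetilde{\mathcal{Z}}_w^N$, Corollary \ref{cor:cYasuda-sh} writes $z = \sum_{j=0}^{w} (\pi i)^j s_{w-j}$ with each $s_{w-j} \in \mathcal{Z}^{\S(N,\alpha),\shuffle}_{w-j}$. The congruence \eqref{eq:RS=S} (with weight control) then lets me replace every $\shuffle$-CSMZV appearing in $s_{w-j}$ by the corresponding RCSMZV of the same weight, at the cost of an error in $\pi i \widetilde{\mathcal{Z}}_{w-j-1}^N$; by the inductive hypothesis $\widetilde{\mathcal{Z}}_{w'}^N = \mathcal{Z}_{w'}^{\RS(N,\alpha)}$ for $w' < w$, this error already lies in $\pi i \mathcal{Z}_{w-j-1}^{\RS(N,\alpha)}$. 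Regrouping by powers of $\pi i$ yields an expression $z = \sum_{j=0}^{w} (\pi i)^j r_{w-j}$ with $r_{w-j} \in \mathcal{Z}_{w-j}^{\RS(N,\alpha)}$.

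To conclude $z \in \mathcal{Z}_w^{\RS(N,\alpha)}$ it suffices to have $(\pi i)^j \mathcal{Z}_{w-j}^{\RS(N,\alpha)} \subset \mathcal{Z}_w^{\RS(N,\alpha)}$ for every $j$. Since the harmonic product relation \eqref{eq:RSds} makes $\mathcal{Z}^{\RS(N,\alpha)}$ a weight-graded $K_N$-algebra, this reduces to the single inclusion $\pi i \in \mathcal{Z}_1^{\RS(N,\alpha)}$: the multiplicative structure then produces $(\pi i)^j \in \mathcal{Z}_j^{\RS(N,\alpha)}$ for all $j \ge 1$, closing the induction.

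The main obstacle will be verifying $\pi i \in \mathcal{Z}_1^{\RS(N,\alpha)}$. A direct expansion of the $\shuffle$-CSMZV definition gives $\zeta^{\shuffle}_{\S(N,\alpha)}\!\left(\begin{smallmatrix}1\\1\end{smallmatrix}\right) = 0$, so by \eqref{eq:RS=S} one already has $\zeta_{\RS(N,\alpha)}\!\left(\begin{smallmatrix}1\\1\end{smallmatrix}\right) = c \cdot \pi i$ for some $c \in K_N$. The delicate point is to show that this constant $c$ is nonzero, which will require unpacking Tasaka's explicit construction of the lift \cite[Definition 3.1]{Tasaka} (for $N = 1$ this amounts to Hirose's refined symmetric MZV). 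Once this constant is pinned down, the induction closes and Theorem \ref{thm:cYasuda-r} follows.
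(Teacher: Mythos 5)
Your proposal follows essentially the same route as the paper: induct on $w$, use Theorem \ref{thm:cYasuda-w} (equivalently Corollary \ref{cor:cYasuda-sh}) together with \eqref{eq:RS=S} to approximate an element of $\widetilde{\mathcal{Z}}_w^N$ by RCSMZVs up to an error in $\pi i\widetilde{\mathcal{Z}}_{w-1}^N$, and then absorb the factor of $\pi i$ via the harmonic product relation \eqref{eq:RSds}. The one point you flag as the main obstacle --- that $\zeta_{\RS(N,\alpha)}\left(\begin{smallmatrix}1\\1\end{smallmatrix}\right)$ is a \emph{nonzero} multiple of $\pi i$ --- is exactly what the paper imports from Tasaka, who computes $\zeta_{\RS(N,\alpha)}\left(\begin{smallmatrix}1\\1\end{smallmatrix}\right)=-\pi i$ explicitly (just below his Definition 3.1), so your constant is $c=-1$ and the induction closes as you describe.
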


\begin{proof}
Since $\mathcal{Z}^{\RS(N,\alpha)}_w\subset  \widetilde{\mathcal{Z}}_w^N$ holds by the definition of CRSMZVs, we prove the opposite inclusion.

We prove this by induction on $w$. The case of $w = 0$ is trivial. Assume $\mathcal{Z}^{\RS(N,\alpha)}_{w-1} \supset \widetilde{\mathcal{Z}}_{w-1}^N$.
Let $c \in  \widetilde{\mathcal{Z}}_w^N$. By Theorem \ref{thm:cYasuda-w} and \eqref{eq:RS=S}, there exists $y\in \mathcal{Z}^{\RS(N,\alpha)}_w$ such that $c - y \in  \pi i \widetilde{\mathcal{Z}}_{w-1}^{N}$. Therefore,
\[
\frac{c - y}{\pi i} \in  \mathcal{Z}_{w-1}^{N} = \mathcal{Z}_{w-1}^{\RS(N,\alpha)}.
\]
By the special case of \eqref{eq:RSds} and $\zeta^{\RS(N,\alpha)}\left( \begin{matrix} 1 \\ 1 \end{matrix} \right) = - \pi i$ (\cite[Below Definition 3.1]{Tasaka}), we have
\[
-\pi i \zeta^{\RS(N,\alpha)}\left(\begin{matrix}  \boldsymbol{\xi} \\ \mathbf{k}\end{matrix}\right) 
=
  \zeta^{\RS(N,\alpha)}\left(\begin{matrix} 1 \\ 1 \end{matrix}\right) \zeta^{\RS(N,\alpha)}\left(\begin{matrix}  \boldsymbol{\xi} \\ \mathbf{k}\end{matrix}\right) 
\in \mathcal{Z}_{w}^{\RS(N,\alpha)},
\]
for $\mathbf{k}\in \mathbb{Z}_{>0}^r$ with $\wt \mathbf{k} = w-1$ and $\boldsymbol{\xi}\in \Gamma_N^r$. Therefore we have $\pi i \mathcal{Z}_{w-1}^{\RS(N,\alpha)} \subset \mathcal{Z}_{w}^{\RS(N,\alpha)}$. Thus $c = (c-y) + y\in \mathcal{Z}_{w}^{\RS(N,\alpha)}$ which complete the proof.
\end{proof}

Theorem \ref{thm:cYasuda-w} recovers by taking the quatient $\pi i \widetilde{Z}_{w-1}^N$ to \eqref{eq:cYasuda-refinement}.

\begin{center}
Acknowledgment
\end{center}

The author also would like to thank Professor Minoru Hirose, Professor Hidekazu Furusho, and Professor Henrik Bachmann for carefully reviewing the detailed structure of the paper.
This work was financially supported by JST SPRING, Grant Number JPMJSP2125. The author would like to thank the “Interdisciplinary Frontier Next-Generation Researcher Program of the Tokai Higher Education and Research System."

\bibliographystyle{amsplain}
\bibliography{CSMZVspan}

\end{document}